\def\R{\mathbb{R}}
\def\T{\mathbb{T}}
\def\N{\mathbb{N}}
\def\Z{\mathbb{Z}}
\def\C{\mathbb{C}}
\def\D{\mathcal{D}}
\def\ov{\overline}
\renewcommand{\d}{\,{\rm d}}
\newtheorem{theorem}{Theorem}[section]
\newtheorem{corollary}[theorem]{Corollary}
\newtheorem{proposition}[theorem]{Proposition}
\newtheorem{lemma}[theorem]{Lemma}
\numberwithin{equation}{section}
\title[A discrete variational nonlinear Hausdorff--Young inequality]{A variational nonlinear Hausdorff--Young inequality in the discrete setting}
\author{Diogo Oliveira e Silva}
\address{
        Diogo Oliveira e Silva\\
        Hausdorff Center for Mathematics\\
        53115 Bonn, Germany}
\email{dosilva@math.uni-bonn.de}
\thanks{The author was partially supported by the Hausdorff Center for Mathematics and DFG grant CRC 1060.}
\date{\today}                                           
\begin{document}

\begin{abstract}
{\noindent Following the works of  Lyons \cite{Ly, Ly2} and Oberlin, Seeger, Tao, Thiele and Wright \cite{OSTTW}, we relate the variation of certain discrete curves on the Lie group $\text{SU}(1,1)$  to the corresponding variation of their linearized versions on the Lie algebra. 
Combining this with a discrete variational Menshov--Paley--Zygmund theorem, we establish a variational Hausdorff--Young inequality for a discrete version of the nonlinear Fourier transform \mbox{on $\text{SU}(1,1)$.} 
}
\end{abstract}

\subjclass[2010]{43A32, 43A50}
\keywords{Nonlinear Fourier transform, variational estimates, Hausdorff--Young inequality.}

\maketitle
\section{Introduction}

The authors of \cite{OSTTW} considered a continuous version of the $\text{SU}(1,1)$-valued nonlinear Fourier transform, and established a variational Hausdorff--Young inequality which served as inspiration for much of the present work. In this paper, we investigate the corresponding discrete problem and establish Theorem \ref{VNLHY} below. Before stating it precisely, we introduce some context and background material.

\subsection{The generalized special unitary group and its Lie algebra} 

\noindent 
Following Tao and Thiele \cite{TT}, we are interested in a particular nonlinear Fourier transform taking values in the generalized special unitary group
$$\text{SU}(1,1)=\Big\{
\left(
\begin{array}{cc}
a & b \\
\ov{b} & \ov{a}
\end{array}\right)
: a,b\in\C
\text{ and }
 |a|^2-|b|^2=1
\Big\}.$$
This is a connected three-dimensional Lie group which is isomorphic to $\text{SL}(2,\R)$, and therefore neither compact nor simply connected. The corresponding Lie algebra is \mbox{given by}
$$\mathfrak{su}(1,1)=\Big\{
\left(
\begin{array}{cc}
ir & s-it \\
s+it  & -ir
\end{array}\right)
: r,s,t\in\R\Big\}.$$
Sometimes we shall identify a matrix of the form $\left(\begin{smallmatrix}a&b\\\ov{b} &\ov{a} \end{smallmatrix}\right)$ with its first row, thereby writing expressions like $(a,b)\in \text{SU}(1,1)$. The operator norm of a matrix $(a,b)\in\text{SU}(1,1)$ acting on the Hilbert space $\C^2$
is  given by the simple expression
\begin{equation}\label{opnorm}
\|(a,b)\|_{\text{op}}=|a|+|b|,
\end{equation}
see \cite[Lemma 33]{TT}.
 We endow $ \text{SU}(1,1)$ with the distance
\begin{equation}\label{distance}
d(X,Y)=\log(1+\|X^{-1}\cdot Y-I\|_{\text{op}}),
\end{equation}
where $I$ denotes the $2\times 2$ identity matrix. 
We also turn $\mathfrak{su}(1,1)$ into a normed algebra by endowing it with the operator norm.
For a further discussion of the Lie group $\text{SU}(1,1)$ and its Lie algebra, we refer the reader to \cite[\S 10.4]{S} and \cite[Appendix 2]{TT}.

\subsection{The nonlinear Fourier transform}
Let $\D$ denote the open unit disc in the complex plane, whose boundary coincides with the unit circle $\T$. Given a square-integrable, $\D$-valued sequence $F\in\ell^2(\Z,\D)$, its linear (inverse) Fourier transform is given by 
$$\widecheck{F}(z)=\lim_{N\to\infty}\sum_{|n|\leq N}F_n z^n, \;\;\;\;\;\;(z\in\T)$$
where the limit is taken in the sense of $\textup{L}^2(\T)$.
With the goal of defining a nonlinear analogue of the Fourier transform,
consider a compactly supported, $\D$-valued sequence $F$, 
which will often be referred to as a {\it potential}.
The associated transfer matrices $\{T_n\}$ are defined, for each $n\in\Z$ and $z\in\T$, by
$$T_n(z)=\left(
\begin{array}{cc}
1 & F_n z^n \\
\overline{F_n} z^{-n}  & 1
\end{array}\right)(1-|F_n|^2)^{-\frac12}.
$$
Note that $T_n(z)\in \text{SU}(1,1)$.
We define the nonlinear Fourier transform of the potential $F$ as an $\text{SU}(1,1)$-valued function on the unit circle $\T$, given by the expression
\begin{equation}\label{defNFT}
(a,b)(z)=\lim_{N\to\infty}\prod_{n=-N}^N 
T_n(z),\;\;\;(z\in\T)
\end{equation}
where the ordered product is actually finite.
In this case, a contour integration establishes the following nonlinear analogue of Plancherel's $\textup{L}^2-\textup{L}^2$ estimate:\footnote{Here and in what follows, $\int_\T$ denotes the integral with respect to Lebesgue measure on $\T$, normalized to have total mass 1.}
\begin{equation}\label{Plancherel}
\int_\T \log|a(z)|=-\frac{1}{2}\sum_{n\in\Z}\log(1-|F_n|^2),
\end{equation}
which can then be used to extend the definition of $(a,b)$ to $\D$-valued sequences $F$ which are merely assumed to be square-integrable.
If the sequence $F$ is further required to belong to $\ell^1$, then a clever but elementary convexity argument establishes the estimate
\begin{equation}\label{riemannlebesgue}
\sup_{z\in\T}\sqrt{\log|a(z)|}\leq\sum_{n\in\Z}\sqrt{\log((1-|F_n|^2)^{-\frac12})},
\end{equation}
which should be thought of as a nonlinear equivalent of the trivial $\textup{L}^1-\textup{L}^\infty$ estimate for the linear Fourier transform.
One would like to interpolate estimates \eqref{Plancherel} and \eqref{riemannlebesgue} to conclude a nonlinear version of the $\textup{L}^p-\textup{L}^{p'}$ Hausdorff--Young inequality when $1<p<2$. 
Here and throughout the paper, $p'=\frac p{p-1}$ denotes the exponent conjugate to $p$.
Even though standard interpolation tools are not available in the present nonlinear context, the following estimate  is a consequence of the seminal work of Christ and Kiselev \cite{CK, CK2} on the spectral theory of one-dimensional Schr\"{o}dinger operators: Given $1< p< 2$, there exists a constant $C_p<\infty$ such that, for every sequence $F\in\ell^p(\Z,\D)$, 
\begin{equation}\label{phausdorffyoung}
\Big\|\sqrt{\log |a(z)|}\Big\|_{\textup{L}^{p'}(\T)}\leq C_p\Big\| \sqrt{|\log(1-|F_n|^2)|}\Big\|_{\ell^p(\Z)}.
\end{equation}
The cases $p=1$ and $p=2$ of inequality \eqref{phausdorffyoung} boil down to \eqref{riemannlebesgue} and \eqref{Plancherel}, respectively.
For precise statements and proofs of these estimates, 
see \cite{TT} and the references therein. An interesting open problem is whether the constant $C_p$ in inequality \eqref{phausdorffyoung} can be chosen uniformly in $p$ as $p\rightarrow 2^-$. The answer is known to be affirmative in a variant for a particular Cantor group model of the continuous nonlinear Fourier transform \cite{K}, see also \cite{KOSR} for a recent related result.

\subsection{Main result} \noindent In this paper, we aim at a variational refinement of inequality \eqref{phausdorffyoung}. With this purpose in mind, consider the following truncated version of the linear and the nonlinear Fourier transforms of the potential $F$, respectively denoted by $\sigma(F)$ and $\gamma[F]$, given at height \mbox{$N\in\Z$ by}
\begin{equation}\label{defsigmagamma}
\sigma(F)(N;z)=\sum_{n=-\infty}^N\left(
\begin{array}{cc}
0 & F_n z^n \\
\overline{F_n} z^{-n}  & 0
\end{array}\right),\;\text{ and }\;
\gamma[F](N;z)=\prod_{n=-\infty}^N 
T_n(z).
\end{equation}
For each fixed $z\in\T$, it will be convenient to think of the maps $N\mapsto \gamma[F](N;z)$ and $N\mapsto \sigma(F)(N;z)$ as discrete curves taking values on $\text{SU}(1,1)$ and $\mathfrak{su}(1,1)$, respectively. To make the notation less cumbersome, we shall write  $\gamma_N(z)$ instead of $\gamma[F](N;z)$ whenever there is no danger of confusion, and similarly for the curve $\sigma(F)$. 
Given an exponent $1\leq r\leq\infty$, we will be interested in measuring the $r$-variation in the variable $N$ of the curves $\sigma=\sigma(F)$ and $\gamma=\gamma[F]$. If $1\leq r<\infty$, then these variations at a given point $z\in\T$ are defined as
\begin{align*}
\mathcal{V}_r(\sigma)(z)&=\sup_K\sup_{N_0<\ldots<N_K}\Big(\sum_{j=0}^{K-1} 
\|\sigma_{N_{j+1}}(z)-\sigma_{N_j}(z)\|_{\text{op}}^r\Big)^{\frac1r},\\
\mathcal{V}_r(\gamma)(z)&=\sup_K\sup_{N_0<\ldots<N_K}\Big(\sum_{j=0}^{K-1} d(\gamma_{N_j}(z),\gamma_{N_{j+1}}(z))^r\Big)^{\frac1r},
\end{align*}
where the suprema are taken over all strictly increasing finite sequences of integers $N_0<N_1<\ldots<N_K$ and over all integers $K$.
We can extend this to the case $r=\infty$ in the usual manner.
 The main result of this paper is the following theorem, 
 which should be compared to the estimates in \cite[p. 461]{OSTTW}. 

\begin{theorem}\label{VNLHY}
Let
$1\leq p<2$ and $r>p$. Then there exists a constant $C_{p,r}<\infty$ such that 
\begin{align}
\| \mathcal{V}_r(\gamma[F])\|_{\textup{L}^{p'}(\mathbb{S})}
+\|\mathcal{V}_r(\gamma[F])\|^{\frac1r}_{\textup{L}^{\frac{p'}r}(\T\setminus\mathbb{S})}
&\leq C_{p,r} \Big\|\log\Big(\frac{1+|F_n|}{1-|F_n|}\Big)\Big\|_{\ell^p(\Z)},\label{mainineq1}
\end{align}
for every $F\in\ell^p(\Z,\D)$.
Here $\mathbb{S}$ denotes the subset of the unit circle defined as follows:
\begin{equation}\label{defS}
\mathbb{S}=\mathbb{S}_{p,r}[F]
=\{z\in\T: \mathcal{V}_s(\gamma[F])(z)\leq 1\},
\end{equation}
where $s=r$ if $p<r<2$, and $s=\frac{p+2}2$ if $r\geq 2$.
\end{theorem}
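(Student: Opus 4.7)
The plan is a two-step reduction: first linearize the nonlinear curve $\gamma$ in terms of the linear curve $\sigma$ pointwise on $\T$, and then apply a discrete variational Menshov--Paley--Zygmund theorem to $\sigma$.

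For the linearization, I would expand each transfer matrix as $T_n(z) = I + L_n(z) + O(|F_n|^2)$, where $L_n(z)$ is the $\mathfrak{su}(1,1)$-valued matrix whose telescoping sum is precisely $\sigma_N(z)$. Multiplying out the ordered product $\gamma_{N_{j+1}}\gamma_{N_j}^{-1} = \prod_{n=N_j+1}^{N_{j+1}} T_n(z)$ and measuring the result via \eqref{distance}, one obtains a pointwise comparison
\begin{equation*}
d(\gamma_{N_j},\gamma_{N_{j+1}}) \leq \|\sigma_{N_{j+1}} - \sigma_{N_j}\|_{\textup{op}} + \textup{(higher-order error)},
\end{equation*}
in which the error collects iterated products of transfer matrix increments. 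Raising to the $r$-th power, summing over the partition, and applying a Young-type inequality for variation in the spirit of Lyons \cite{Ly, Ly2} would bound the aggregated error by a mixed term of the form $\mathcal{V}_s(\gamma)^{\alpha}\mathcal{V}_r(\sigma)^{\beta}$. On $\mathbb{S}$ the factor $\mathcal{V}_s(\gamma)$ is at most $1$ and the error is absorbed, yielding the pointwise bound $\mathcal{V}_r(\gamma) \lesssim \mathcal{V}_r(\sigma) + 1$ on $\mathbb{S}$. The dichotomy $s=r$ versus $s=(p+2)/2$ reflects that Young's inequality for variation requires $s<2$ together with a superadditivity condition on the exponents: for $p<r<2$ one may set $s=r$ directly, while for $r\geq 2$ the exponent must be pushed below $2$, with $(p+2)/2$ being the largest value for which the required linear variational bound is still available.

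For the linear piece, the discrete variational Menshov--Paley--Zygmund theorem provides, for any $r>p$,
\begin{equation*}
\|\mathcal{V}_r(\sigma(F))\|_{\textup{L}^{p'}(\T)} \leq C_{p,r}\|F\|_{\ell^p(\Z)}.
\end{equation*}
Combined with the elementary inequality $|F_n| \lesssim \log\frac{1+|F_n|}{1-|F_n|}$, this and the linearization produce the first term of \eqref{mainineq1} on $\mathbb{S}$. To treat the second term I would estimate the measure of $\T\setminus\mathbb{S}$ via Chebyshev applied to the Menshov--Paley--Zygmund bound at exponent $s$ (invoking the nonlinear Plancherel identity \eqref{Plancherel} interpolated against \eqref{riemannlebesgue} when $s=(p+2)/2$), and then use H\"older on $\T\setminus\mathbb{S}$ to extract an $\textup{L}^{p'/r}$ norm with the outer exponent $1/r$; the exponent $1/r$ is exactly what Chebyshev delivers when one pays for the defect $\mathcal{V}_s(\gamma)>1$.

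The main obstacle I anticipate is the linearization step, specifically the Young-type inequality for variation needed to absorb the quadratic and higher-order errors produced by the product expansion. This demands a careful bookkeeping of iterated products of transfer-matrix increments, and the admissible range of the threshold exponent $s$ is dictated precisely by what such a Young inequality forces, matched against what the linear Menshov--Paley--Zygmund theorem can deliver at that exponent. Once this pointwise comparison is in hand, the deduction of \eqref{mainineq1} from the linear bound is essentially routine.
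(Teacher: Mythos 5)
Your overall architecture --- a pointwise (in $z$) comparison of $\mathcal{V}_r(\gamma)$ with $\mathcal{V}_r(\sigma)$ in the spirit of Lyons, followed by a discrete variational Menshov--Paley--Zygmund bound for $\sigma$ --- is the same as the paper's, but two steps of your reduction fail as stated. First, the pointwise conclusion you draw on $\mathbb{S}$, namely $\mathcal{V}_r(\gamma)\lesssim \mathcal{V}_r(\sigma)+1$, is too weak to give the first term of \eqref{mainineq1}: the additive constant contributes $|\mathbb{S}|^{1/p'}$, which is not dominated by $\|\log\frac{1+|F_n|}{1-|F_n|}\|_{\ell^p(\Z)}$ for small potentials (take a single entry $F_0=\varepsilon$ with $\varepsilon$ tiny; then the right-hand side is $O(\varepsilon)$ while your bound only yields $O(1)$). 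One needs a comparison with no constant term, and the paper achieves this by proving the comparison in \emph{both} directions: on $\mathbb{S}$ the hypothesis $\mathcal{V}_r(\gamma)\leq 1$ is first converted, via the reverse inequality of Corollary \ref{revcomparingvars}, into $\mathcal{V}_r(\sigma)\lesssim_r 1$ and $\|F\|_{\ell^r}\lesssim_r 1$, after which the error term $\mathcal{V}_r^2(\sigma)\wedge\mathcal{V}_r^r(\sigma)$ of Proposition \ref{comparingvars} is absorbed into $\mathcal{V}_r(\sigma)$ itself. Your proposal contains no such reverse comparison, so the absorption cannot be made linear. Relatedly, the expansion $T_n=I+L_n+O(|F_n|^2)$ says nothing about entries with $|F_n|$ close to $1$, where a single step of $\gamma$ has length $\tfrac12\log\frac{1+|F_n|}{1-|F_n|}\gg |F_n|$; this is exactly why the paper's comparison carries the extra large-jump term $\|F\|_{\ell^r}^{r-1}\|\log\frac{1+|F_n|}{1-|F_n|}\|_{\ell^r}$, produced by a global decomposition into small pieces and large jumps (the discrete substitute for the continuous subdivision lemma of OSTTW). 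Your error bookkeeping has no counterpart for these jumps.

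Second, your treatment of $\T\setminus\mathbb{S}$ is not viable: bounding the measure of $\T\setminus\mathbb{S}$ by Chebyshev and then applying H\"older gives no control of $\|\mathcal{V}_r(\gamma)\|^{1/r}_{\textup{L}^{p'/r}(\T\setminus\mathbb{S})}$ unless one already has a pointwise (or higher-integrability) bound for $\mathcal{V}_r(\gamma)$ on the bad set --- precisely where your linearization, which only absorbs errors under $\mathcal{V}_s(\gamma)\leq 1$, gives nothing; moreover, ``interpolating'' \eqref{Plancherel} against \eqref{riemannlebesgue} is exactly the kind of tool the paper points out is unavailable in the nonlinear setting. The paper instead uses a comparison valid at \emph{every} $z$ (Proposition \ref{comparingvars}); its min-structure gives, on $\T\setminus\mathbb{S}$ where necessarily $\mathcal{V}_r(\sigma)\gtrsim_r 1$, the bound $\mathcal{V}_r(\gamma)\lesssim_r \mathcal{V}_r^r(\sigma)+\|\log\frac{1+|F_n|}{1-|F_n|}\|^r_{\ell^r}$, and then applies Proposition \ref{varMPZ} once more. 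Finally, the case $r\geq 2$ is handled at the end simply by monotonicity of variation, $\mathcal{V}_r\leq \mathcal{V}_s$ with $s=\frac{p+2}2$ (any exponent strictly between $p$ and $2$ would serve; $s$ is not dictated by a Young-type constraint as you suggest). Until you prove a quantitative two-sided comparison of the type of Proposition \ref{comparingvars} and Corollary \ref{revcomparingvars} --- including the large-jump term and without the restriction $\mathcal{V}_s(\gamma)\leq 1$ --- the passage from the linear estimate to \eqref{mainineq1} is not routine.
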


\noindent 
 A few remarks may help to further orient the reader. 
\begin{itemize}
\item The usual logarithm (that appears for instance on the left-hand side of inequality \eqref{phausdorffyoung}) is hidden in the metric $d$ which we have placed on the group $\text{SU}(1,1)$.
\item The condition $r>p$ is sharp, except at the endpoint $p=1$. 
In this case, if $p=r=1$, then inequality \eqref{mainineq1} holds with $C_{1,1}\leq 1$. 
In the general case, \eqref{mainineq1} can only hold if $r\geq p$, as can easily be seen by considering potentials given by $F_n=\frac 12$ if $|n|\leq N$, and $F_n=0$ otherwise.  
Furthermore, if $p>1$, then \eqref{mainineq1} can only hold under the strict inequality $r>p$, as discussed after the statement of the corresponding linearized result, Proposition \ref{varMPZ} below.
\item  
 Extending Theorem \ref{VNLHY} to $p=2$, even when $r=\infty$, is a challenging open problem that would imply a  nonlinear analogue of Carleson's theorem on almost everywhere convergence of partial Fourier series, see \cite{MTT, MTT2} for an extended discussion.
More modestly, one can ask for a restricted weak type result at the endpoint $r=p$ when $1<p<2$.
\end{itemize}

\noindent Let us briefly comment on the proof of Theorem \ref{VNLHY}. 
It comprises two parts which constitute the upcoming sections. 
In \S \ref{sec:Variations}, we follow the adaptation of the work of Lyons \cite{Ly, Ly2} by Oberlin et al. \cite{OSTTW} to study variation norms on $\text{SU}(1,1)$. In particular, given a potential $F$, we show that the $r$-variation of the discrete curve $\gamma[F]$ can be controlled by the 
$r$-variation of the linearized curve $\sigma(F)$ plus an extra term that accounts for the possible presence of large jumps, as long as $r<2$.  
Most of the analysis is  local, and uses a modified induction on scale (or ``Bellman function'') argument inspired by \cite[Lemma C.3]{OSTTW} to control multiple steps of the curve $\gamma[F]$ at once. 
The corresponding step from \cite{OSTTW} relies on a key subdivision result, \cite[Lemma C.2]{OSTTW}, which asserts that any continuous curve can be decomposed into two proper sub-curves, each of which satisfies good $r$-variational  bounds.  
No such decomposition is available in the discrete setting. 
Instead, we decompose the partial sum process into a part encoding large range displacement and a part which tracks variation on a prescribed local scale, and analyze them separately.
This strategy already appeared in the works \cite{LL, LyY}.
One is then left with establishing a discrete variational version of the classical Menshov--Paley--Zygmund theorem. This step requires $r>p$, and is accomplished in \S \ref{sec:VMPZ} via an adaptation of the original argument of Christ--Kiselev \cite{CK2} to the variational setting.
The proof of Theorem \ref{VNLHY} is then completed in a few strokes in \S\ref{sec:corproofs}. 
\\

\noindent {\bf Notation.}
If $x,y$ are real numbers, we write $x=O(y)$ or $x\lesssim y$ if there exists a finite absolute constant $C$ such that $|x|\leq C|y|$. 
If we want to make explicit the dependence of the constant $C$  on some parameter $\alpha$, we  write $x=O_\alpha(y)$ or $x\lesssim_\alpha y$. 
We also write $x\vee y=\max\{x,y\}$ and $x\wedge y=\min\{x,y\}$.
The indicator function of a set $E$ is \mbox{denoted by ${\bf 1}_E$.}

\section{Variational estimates}\label{sec:Variations}

This section is devoted to controlling the variation of a curve on the Lie group $\text{SU}(1,1)$ in terms of the variation  of its linearized version. 
We refer the reader to \cite{DMT, DMT2, JSW, OSTTW, W} for background on variation seminorms, and proceed to explore some elementary properties which will be useful in our analysis. 
We shall state them for the exponentiated curve $\gamma=\gamma[F]$ only, but in each case it will be clear what the corresponding property for the linearized curve $\sigma=\sigma[F]$ should be. 
To make the forthcoming notation less cumbersome, we will occasionally drop the dependence on $F$ and $z$.
An application of Minkowski's inequality reveals that the $r$-variation is decreasing in the exponent $r$:
\begin{equation}\label{nestedVr}
\mathcal{V}_{s}(\gamma)\leq \mathcal{V}_{r}(\gamma),\textrm{ whenever }1\leq r\leq s\leq \infty.
\end{equation}
Implicit in the notation $\mathcal{V}_r(\gamma)$ is the fact that the variation is taken over the whole domain of definition of $\gamma$.  We will sometimes need to restrict the domain of the discrete curves under consideration. 
Given integers $a\leq b$, the restriction of the curve $\gamma$ to an interval $[a,b]\subset\Z$ will be denoted by $\gamma|_{[a,b]}$,
and the $r$-variation of the curve $\gamma$ on the interval $[a,b]$ is defined as
$$\mathcal{V}_r(\gamma;[a,b])=\sup_K\sup_{a=N_0<\ldots<N_K=b}\Big(\sum_{j=0}^{K-1} d(\gamma_{N_j},\gamma_{N_{j+1}})^r\Big)^{\frac1r}.$$
Two discrete curves $\gamma_1:[a,c]\to\text{SU}(1,1)$ and  $\gamma_2:[c,b]\to\text{SU}(1,1)$ can be concatenated provided $\gamma_1(c)=\gamma_2(c)$.
If $1\leq r\leq\infty$ and $\gamma_1\oplus\gamma_2:[a,b]\to\text{SU}(1,1)$ denotes the concatenation of $\gamma_1$ and $\gamma_2$, then it is a straightforward matter to establish the  triangle inequalities
\begin{equation}\label{triangle}
(\mathcal{V}_r^r(\gamma_1)+\mathcal{V}_r^r(\gamma_2))^{\frac1r}
\leq
\mathcal{V}_r(\gamma_1\oplus\gamma_2)
\leq 
\mathcal{V}_r(\gamma_1)+\mathcal{V}_r(\gamma_2).
\end{equation} 
If  $r=1$, then these inequalities combine into an equality. 
Moreover, both inequalities in \eqref{triangle} extend to an arbitrary (even countably infinite) number of summands.
 We seek to establish the following quantitative result.

\begin{proposition}\label{comparingvars}
Let $1\leq r<2$. Then there exists a constant $C_r<\infty$ such that
\begin{equation}\label{gammasigmaVr}
\mathcal{V}_r(\gamma)(z)\leq 
\mathcal{V}_r(\sigma)(z)+C_r\Big(\mathcal{V}_r^2(\sigma)(z)\wedge\mathcal{V}_r^r(\sigma)(z)
+\|F\|^{r-1}_{\ell^r(\Z)}  \Big\|\log\Big(\frac{1+|F_n|}{1-|F_n|}\Big)\Big\|_{\ell^r(\Z)}\Big),
\end{equation}
 for every potential $F$
and every $z\in\T$.
\end{proposition}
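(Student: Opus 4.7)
The approach is to compare the nonlinear product $\gamma$ with its linearization $\sigma$ by explicitly expanding the transfer matrix. Writing $T_n(z) = (1-|F_n|^2)^{-\frac12}(I + L_n(z))$ with $L_n(z) = \left(\begin{smallmatrix} 0 & F_n z^n \\ \overline{F_n} z^{-n} & 0 \end{smallmatrix}\right)$, a block product factors as
\begin{equation*}
\gamma_a^{-1}\gamma_b = \Bigl(\prod_{n=a+1}^{b}(1-|F_n|^2)^{-\frac12}\Bigr)\bigl(I + (\sigma_b - \sigma_a) + Q_{a,b}\bigr),
\end{equation*}
where $Q_{a,b}$ collects all terms in the binomial expansion that involve at least two factors of $L_n$. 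Combined with \eqref{opnorm} and \eqref{distance}, this yields a local comparison between $d(\gamma_a,\gamma_b)$ and $\|\sigma_b-\sigma_a\|_{\text{op}}$, up to a scalar-prefactor error of order $\sum_{a<n\leq b}\log\frac{1+|F_n|}{1-|F_n|}$ and the nonlinear remainder $\|Q_{a,b}\|_{\text{op}}$.

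The technical core is controlling $Q_{a,b}$ on a single block by the $r$-variation of $\sigma$ there, via an induction-on-scale (Bellman-function) argument modeled on \cite[Lemma C.3]{OSTTW}. The obstacle in the discrete setting is that OSTTW's proof relies on the subdivision result \cite[Lemma C.2]{OSTTW}, which splits any continuous curve into two sub-curves of comparable $r$-variation; no such freedom exists when jumps are quantized to integer indices. Following \cite{LL, LyY}, I would replace it by a dyadic decomposition of the potential on $[a,b]$ into (i) a \emph{large-range} part, consisting of indices $n$ where $|F_n|$ exceeds a chosen threshold, whose contribution to $d(\gamma_a,\gamma_b)$ is absorbed directly into the scalar-prefactor error, and (ii) a \emph{local} part on which a quadratic expansion of the matrix product is valid and can be controlled inductively. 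The hypothesis $r<2$ enters here to guarantee that the quadratic corrections sum geometrically across scales. The outcome is a block bound of the shape
\begin{equation*}
\|Q_{a,b}\|_{\text{op}} \lesssim_r \mathcal{V}_r^2(\sigma;[a,b]) \wedge \mathcal{V}_r^r(\sigma;[a,b]) + \sum_{a<n\leq b}\log\tfrac{1+|F_n|}{1-|F_n|}.
\end{equation*}

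With this block estimate in hand, the proof is assembled by taking a generic partition $N_0 < \cdots < N_K$, applying the local comparison block by block, raising to the $r$-th power and summing, and then using the triangle inequality \eqref{triangle} together with H\"older's inequality to convert the collected scalar errors into the factor $\|F\|_{\ell^r}^{r-1}\,\bigl\|\log\frac{1+|F_n|}{1-|F_n|}\bigr\|_{\ell^r(\Z)}$. The main obstacle is the block estimate of the second paragraph: designing the discrete substitute for OSTTW's subdivision lemma, calibrating the dyadic thresholds so that the multiplicative structure of $\mathrm{SU}(1,1)$ interacts correctly with the decomposition, and tracking both the quadratic and scalar errors through the Bellman induction without losing the sharp dependence on $\mathcal{V}_r(\sigma)$.
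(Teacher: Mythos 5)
Your overall architecture --- expanding the transfer-matrix product, running an induction on scale in place of the OSTTW subdivision lemma, and splitting the potential into large and small jumps --- matches the paper's proof, but the error bookkeeping has a genuine gap that breaks the final assembly. Your local comparison and your block bound charge a \emph{first-order} scalar error $\sum_{a<n\le b}\log\frac{1+|F_n|}{1-|F_n|}$ on every block. When you then take a generic partition, raise to the $r$-th power, sum, and apply H\"older, the coarse partitions force you to control $\sum_{n}\log\frac{1+|F_n|}{1-|F_n|}$, an $\ell^1$ quantity, and H\"older converts $\ell^1$ into $\ell^r$ only at the price of $(\#\operatorname{supp}F)^{1-\frac1r}$, not $\|F\|_{\ell^r}^{r-1}$. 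Concretely, take $F_n=\varepsilon$ for $1\le n\le \varepsilon^{-r}$ and $z=-1$: the right-hand side of \eqref{gammasigmaVr} is $O_r(1)$, yet the scalar error your scheme collects on the one-block partition is of size $\varepsilon^{1-r}\to\infty$, so your chain of inequalities does not yield the proposition. The missing mechanism is that on the small-jump part the scalar prefactor is \emph{quadratic}, $\sum|\log(1-|F_n|^2)|\lesssim\sum|F_n|^2\le\mathcal{V}_r^2(\sigma;[a,b])$ (this is \eqref{Est2}), so no first-order scalar term should appear there at all; first-order logarithms may be charged only at the large jumps $|F_n|>\frac\delta2$, and it is the Chebyshev bound \eqref{Chebyshev} on their number, fed into H\"older, that produces the factor $\|F\|_{\ell^r}^{r-1}$. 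You mention such a threshold, but neither your stated block bound nor your assembly actually uses it. (As a side remark, your displayed estimate for $\|Q_{a,b}\|_{\text{op}}$ cannot hold unconditionally: for a long block with many moderate entries the remainder grows exponentially in $\sum_n|F_n|$ while your right-hand side is polynomial; such bounds are only available under a smallness hypothesis on $\mathcal{V}_r(\sigma;[a,b])$, or after the logarithm implicit in the metric $d$.)

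Relatedly, the induction on scale only gives the comparison \eqref{step2} under the hypothesis $\mathcal{V}_r(\sigma;[M,N])\le\delta$: the inductive step requires the group elements to stay near the identity (for the estimate \eqref{BCK}) and the strict contraction $\alpha^{\frac2r}+(1-\alpha)^{\frac2r}<1$, neither of which survives past a fixed threshold. So the local comparison cannot be applied ``block by block'' to an arbitrary partition, whose blocks may carry large variation. The paper's Step 3 resolves this by comparing not against the partition defining $\mathcal{V}_r(\gamma)$ but against one fixed decomposition adapted to $F$: singleton pieces at the $J\lesssim\delta^{-r}\|F\|_{\ell^r}^r$ large jumps plus $L=O(\delta^{-r}\mathcal{V}_r^r(\sigma))$ pieces of variation at most $\delta$; it then bounds $\mathcal{V}_r(\gamma)$ by the sum of the pieces' variations via \eqref{triangle} and applies H\"older with these two counts. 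That counting step is what generates both the $\mathcal{V}_r^r$ branch of the minimum and the $\|F\|_{\ell^r}^{r-1}$ factor, and it (or an equivalent) is absent from your outline. Also left unspecified is the discrete substitute for the subdivision lemma inside the induction, which in the paper is the dichotomy: either a balanced subdivision exists, or a single one-step jump carries most of the variation and is handled by the exact one-step formula \eqref{unitstep}.
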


\begin{proof}[Proof of Proposition \ref{comparingvars}]
We lose no generality in assuming that $r>1$. 
Indeed, the 1-variation $\mathcal{V}_1$ is additive as quantified by \eqref{triangle}.
Moreover, identity \eqref{opnorm} and some straightforward algebra imply that the operator norm of the matrix $T_n(z)$ satisfies
\begin{equation}\label{opnormtransfermatrix}
\log(1+\|T_n(z)-I\|_{\text{op}})
=\frac{1}{2}\log\Big(\frac{1+|F_n|}{1-|F_n|}\Big).
\end{equation}
These two observations imply
$$\mathcal{V}_1(\gamma)(z)
=\sum_{n\in\Z} \mathcal{V}_1(\gamma;[n-1,n])(z)
=\frac 12\sum_{n\in\Z}\log\Big(\frac{1+|F_n|}{1-|F_n|}\Big), \text{ for every } z\in\T,$$
which in turn implies a stronger form of \eqref{gammasigmaVr}.
For $1<r<2$, the somewhat lengthy proof of Proposition \ref{comparingvars} is divided into three steps. 
The first two steps deal with the local analysis.\\

\noindent{\it Step 1.} {\it For every $1< r<2$, there exist constants $0<\delta<\frac 12$ and $K<\infty$ with the following property. For every potential $F$, the curves $\gamma=\gamma[F]$ and $\sigma=\sigma(F)$ satisfy
\begin{equation}\label{step2}
\|\log(\gamma_M^{-1}(z)\cdot\gamma_N(z))-(\sigma_N(z)-\sigma_M(z))\|_{\emph{\text{op}}}
\leq K \mathcal{V}_r^2(\sigma;[M,N])(z)
\end{equation}
 for every $z\in\T$ and  integers $M\leq N$, provided $\mathcal{V}_r(\sigma;[M,N])(z)\leq\delta$.}
\vspace{.2cm}

\noindent Start by noting that, if $\delta$ is sufficiently small, then the matrix $(\gamma_M^{-1}\cdot\gamma_{N})(z)=\prod_{M<n\leq N}T_{n}(z)$ is sufficiently close to the identity  so that the logarithm is well-defined.
If $N=M+1$, then the left-hand side of inequality \eqref{step2} can be determined exactly.
Indeed, a straightforward computation yields
$$\log(\gamma_{N-1}^{-1}\cdot\gamma_{N})
=g(F_{N}) (0 , F_{N} z^{N} ),$$
where the function $g$ is given by
$g(t)=(2|t|)^{-1}{\log\Big(\frac{1+|t|}{1-|t|}\Big)}.$
\noindent It follows that
\begin{equation}\label{unitstep}
\|\log(\gamma_{N-1}^{-1}\cdot\gamma_{N})-(\sigma_{N}-\sigma_{N-1})\|_{\text{op}}
=|g(F_{N})-1|\|(0 , F_{N} z^{N} )\|_{\text{op}}
=|g(F_{N})-1||F_{N}|.
\end{equation}
A Taylor expansion of the function $g$ to order 2 reveals that this term is
$O(|F_N|^3)$ if $|F_N|\leq\frac 12$. 
To handle the case $N>M+1$, we perform an induction on scale argument which follows the corresponding part of the proof of \cite[Lemma C.3]{OSTTW}, with some differences that we highlight below.
Let us first establish inequality \eqref{step2} in the case when the quantity 
 $\mathcal{V}_r(\sigma;[M,N])$
is sufficiently small, depending on
 the potential $F$. 
We have that
\begin{align}
\log (\gamma_M^{-1}\cdot\gamma_N)+\frac 12\Big(\sum_{M<n\leq N} \log(1-|F_n|^2) \Big)I
&=\log\Big(\prod_{M<n\leq N} \Big(I+(0  , F_n z^n)
\Big)\Big)\label{decomposelogs}\\
&=\log(I+(\sigma_N-\sigma_M)+H_M^N),\notag
\end{align}
where $H_M^N=H_M^N[F]$ denotes the higher order terms that appear when we expand the product in the argument of the logarithm. 
Taylor expanding the logarithm, we see that
\begin{equation}\label{Est0}
\|\log(I+(\sigma_N-\sigma_M)+H_M^N)-(\sigma_N-\sigma_M)\|_{\text{op}}
=O(\|\sigma_N-\sigma_M\|_{\text{op}}^2),
\end{equation}
provided
$\|\sigma_N-\sigma_M\|_{\text{op}}$ is sufficiently small.
On the other hand,
\begin{equation}\label{Est1}
\|\sigma_N-\sigma_M\|_{\text{op}}
\leq
\mathcal{V}_r(\sigma;[M,N]).
 \end{equation}
Further notice that, if $|F_n|\leq\frac12$ for every $n$, then
\begin{equation}\label{Est2}
\frac 12\sum_{M<n\leq N} \big|\log(1-|F_n|^2)\big|
\leq\sum_{M<n\leq N} |F_n|^2
\leq\Big(\sum_{M<n\leq N} |F_n|^r\Big)^{\frac 2r}
\leq \mathcal{V}^2_r(\sigma;[M,N]).
\end{equation}
Recalling \eqref{decomposelogs}, estimates \eqref{Est0}, \eqref{Est1} and \eqref{Est2} imply that inequality \eqref{step2} holds provided the quantity $\mathcal{V}_r(\sigma;[M,N])$ is sufficiently small, depending on 
the potential $F$.
This provides the base of the induction scheme, which we now perform in order to remove the undesirable dependence on the potential.
To specify the induction step, fix a constant of proportionality $\theta$ with $\frac12<\theta<\frac 34$.\\

\noindent {\bf Inductive Claim.} {\it If estimate \eqref{step2} holds when $\mathcal{V}_r(\sigma;[M,N])<\varepsilon$, for some $0<\varepsilon<\delta$, then it also holds when $\mathcal{V}_r(\sigma;[M,N])<\theta^{-\frac1r} \varepsilon$, provided $K$ is chosen sufficiently large 
and $\delta$ is sufficiently small.}
\vspace{.2cm}

\noindent We emphasize that both parameters $K,\delta$ are allowed to depend only on the exponent $r$, but that $\delta$ will be chosen as a function of $K$ in the course of the proof.
Let us proceed to prove the Inductive Claim. 
Consider a potential $F$ and the corresponding curve $\sigma=\sigma(F)$, such that $A:=\mathcal{V}_r(\sigma;[M,N])$ satisfies $A<\theta^{-\frac1r}\varepsilon$, for some $0<\varepsilon<\delta$. In this case, we have in particular that $A<2\delta$.
By a {\it good subdivision} of the curve $\sigma|_{[M,N]}$ we mean a decomposition $\sigma|_{[M,N]}=\sigma_1\oplus\sigma_2$ into discrete curves $\sigma_1=\sigma|_{[M,L]}$ and $\sigma_2=\sigma|_{[L,N]}$, such that the corresponding variations satisfy
\begin{equation}\label{max}
\mathcal{V}_r(\sigma_1)\vee \mathcal{V}_r(\sigma_2)=\alpha^{\frac1r}A,
\end{equation}
for some $1-\theta\leq \alpha\leq \theta$.
We split the analysis into two  cases, according to whether or not a good subdivision of $\sigma|_{[M,N]}$ exists.

\noindent Suppose first that there exists a good subdivision $\sigma|_{[M,N]}=\sigma_1\oplus\sigma_2$. 
In this case, the argument is analogous to the one in \cite[Lemma C.3]{OSTTW}, but we repeat it here for the convenience of the reader.
We can assume that the maximum in \eqref{max} is attained by the first piece, $\mathcal{V}_r(\sigma_1)=\alpha^{\frac1r}A$, from which it follows by the reverse triangle inequality that 
$$\mathcal{V}_r(\sigma_2)\leq (1-\alpha)^{\frac1r}A.$$
Since $\alpha\leq\theta$, we have that  $\mathcal{V}_r(\sigma_1)<\varepsilon$,  and therefore $\mathcal{V}_r(\sigma_2)<\varepsilon$ as well. By the induction hypothesis, it follows that
\begin{align}
&\|\log(\gamma_M^{-1}\cdot\gamma_L)-(\sigma_L-\sigma_M)\|_{\text{op}}\leq K(\alpha^{\frac1r}A)^2,\text{ and }\label{EstMK}\\
&\|\log(\gamma_L^{-1}\cdot\gamma_N)-(\sigma_N-\sigma_L)\|_{\text{op}}\leq K((1-\alpha)^{\frac1r}A)^2.\label{EstKN}
\end{align}
\noindent Estimate \eqref{EstMK} and $A<2\delta$ together imply
\begin{align*}
\|\log(\gamma_M^{-1}\cdot\gamma_L)\|_{\text{op}}
&\leq \|\sigma_L-\sigma_M\|_{\text{op}}+K(\alpha^{\frac1r}A)^2
\leq \mathcal{V}_r(\sigma;[M,N])+KA^2\\
&=A(1+KA)\leq A(1+2K\delta)=O(A),
\end{align*}
provided $\delta$ is chosen sufficiently small depending on $K$. 
Estimate \eqref{EstKN} similarly implies
$$\|\log(\gamma_L^{-1}\cdot\gamma_N)\|_{\text{op}}=O(A).$$
Invoking the simple estimates
\begin{equation}\label{ExpLogEst}
\|\exp(X)-I-X\|_{\text{op}}=O(\|X\|_{\text{op}}^2)
\text{ and }
\|\log(I+X)-X\|_{\text{op}}=O(\|X\|_{\text{op}}^2),
\end{equation}
both of which hold with implicit constants less than $2$ as long as the matrix $X$ satisfies $\|X\|_{\text{op}}\leq\frac 12$, we then have that
\begin{equation}\label{BCK}
\|\log(\gamma_M^{-1}\cdot\gamma_N)-\log(\gamma_M^{-1}\cdot\gamma_L)-\log(\gamma_L^{-1}\cdot\gamma_N)\|_{\text{op}}=O(A^2),
\end{equation}

\noindent if $\delta$ is sufficiently small.
Using the triangle inequality and estimates \eqref{EstMK}, \eqref{EstKN} and \eqref{BCK}, it follows that
\begin{equation}\label{LogSigmaKO}
\|\log(\gamma_M^{-1}\cdot\gamma_N)-(\sigma_N-\sigma_M)\|_{\text{op}}
\leq 
K(\alpha^{\frac2r}+(1-\alpha)^{\frac2r})A^2+O(A^2).
\end{equation}

\noindent Since $1< r<2$, we have that the quantity $\alpha^{\frac2r}+(1-\alpha)^{\frac2r}<1$ is bounded away from 1, for every $1-\theta\leq\alpha\leq\theta$. 
If $K$ is large enough, depending only on $r$, we can bound the right-hand side of \eqref{LogSigmaKO} by $KA^2$, and thus have established \eqref{step2} if $A<\theta^{-\frac1r}\varepsilon$. This finishes the analysis of the case when a good subdivision exists.

\noindent Let us now assume that no good subdivision of the curve $\sigma|_{[M,N]}$ exists. 
In this case, there exists a one step sub-curve of $\sigma|_{[M,N]}$, say $\sigma|_{[L-1,L]}$,  which accounts for most of its variation. 
Quantitatively,
\begin{equation}\label{quant}
\theta^{\frac1r}-(1-\theta)^{\frac1r}<\frac{\mathcal{V}_r(\sigma;[L-1,L])}{\mathcal{V}_r(\sigma;[M,N])}=\frac{|F_L|}{A}\leq 1, 
\end{equation}
\begin{equation}\label{quant2}
 \mathcal{V}_r(\sigma;[M,L-1])<(1-\theta)^{\frac1r}A,\text{ and }
{\mathcal{V}_r(\sigma;[L,N])}<(1-\theta)^{\frac1r}A.
\end{equation}
Recalling \eqref{unitstep}, we have that
\begin{equation}\label{NewEst1}
\|\log(\gamma_{L-1}^{-1}\cdot\gamma_L)-(\sigma_L-\sigma_{L-1})\|_{\text{op}}
=|g(F_L)-1||F_L|,
\end{equation}
and therefore
\begin{equation}\label{firstlog}
\|\log(\gamma_{L-1}^{-1}\cdot\gamma_L)\|_{\text{op}}\leq |F_L|+O(|F_L|^3).
\end{equation}
The first inequality in \eqref{quant2} implies $\mathcal{V}_r(\sigma;[M,L-1])<\varepsilon$, and so by induction hypothesis 
\begin{equation}\label{NewEst2}
\|\log(\gamma_M^{-1}\cdot\gamma_{L-1})-(\sigma_{L-1}-\sigma_M)\|_{\text{op}}
\leq
K((1-\theta)^{\frac1r}A)^2.
\end{equation}
Since $A<2\delta$, it follows that
\begin{align}
\|\log(\gamma_M^{-1}\cdot\gamma_{L-1})\|_{\text{op}}
&\leq \mathcal{V}_r(\sigma;[M,L-1])+K(1-\theta)^{\frac2r}A^2\notag\\
&\leq (1-\theta)^{\frac1r}A(1+2K(1-\theta)^{\frac1r}\delta)
\leq 2(1-\theta)^{\frac1r}A
\leq \frac{2(1-\theta)^{\frac1r}}{\theta^{\frac1r}-(1-\theta)^{\frac1r}}|F_L|,\label{secondlog}
\end{align}
provided $\delta$ is chosen sufficiently small as a function of $K$. 
An identical analysis applies to the sub-curve $\sigma|_{[L,N]}$ and yields
\begin{equation}\label{NewEst3}
\|\log(\gamma_L^{-1}\cdot\gamma_N)-(\sigma_N-\sigma_L)\|_{\text{op}}
\leq
K((1-\theta)^{\frac1r}A)^2,
\end{equation}
\begin{equation}\label{thirdlog}
\|\log(\gamma_L^{-1}\cdot\gamma_N)\|_{\text{op}}
\leq
\frac{2(1-\theta)^{\frac1r}}{\theta^{\frac1r}-(1-\theta)^{\frac1r}}|F_L|.
\end{equation}
From estimates \eqref{firstlog}, \eqref{secondlog} and \eqref{thirdlog} it follows, as before, that
\begin{equation}\label{NewEst4}
\|\log(\gamma_M^{-1}\cdot\gamma_N)-\log(\gamma_M^{-1}\cdot\gamma_{L-1})-\log(\gamma_{L-1}^{-1}\cdot\gamma_L)-\log(\gamma_L^{-1}\cdot\gamma_N)\|_{\text{op}}=O(|F_L|^2)=O(A^2).
\end{equation}
By the triangle inequality and estimates \eqref{NewEst1}, \eqref{NewEst2}, \eqref{NewEst3} and \eqref{NewEst4}, we then have
$$\|\log(\gamma_M^{-1}\cdot\gamma_N)-(\sigma_N-\sigma_M)\|_{\text{op}}
\leq 
|g(F_L)-1||F_L|+2K(1-\theta)^{\frac2r}A^2+O(A^2).
$$
Since $2(1-\theta)^{\frac2r}<1$, the right-hand side of this inequality can again be majorized by $KA^2$, provided $K$ is sufficiently large (depending on $r$). This concludes the analysis when no good subdivision exists. The Inductive Claim is now  proved, and this settles Step 1.\\

\noindent{\it Step 2.} {\it Given $1< r<2$, let $\delta$ be the constant promised by Step 1. 
Then, for every potential $F$, the curves $\gamma=\gamma[F]$  and $\sigma=\sigma(F)$ satisfy
\begin{equation}\label{step3}
\mathcal{V}_r(\gamma)(z)= \mathcal{V}_r(\sigma)(z) +O_r(\mathcal{V}^2_r(\sigma)(z)),
\end{equation}
for every $z\in\T$, provided $\mathcal{V}_r(\sigma)(z) \leq\delta$.}
\vspace{.2cm}

\noindent The elementary estimates in \eqref{ExpLogEst} imply that short distances in the Lie group and the Lie algebra agree modulo a quadratically small error: Given a matrix $X\in\mathfrak{su}(1,1)$ of sufficiently small operator norm, 
$$d(I,\exp(X))=\log(1+\|\exp(X)-I\|_{\text{op}})=\|X\|_{\text{op}}+O(\|X\|^2_{\text{op}}).$$
Consequently, if $\delta$ is small enough, then, for any integers $M\leq N$,
$$d(\gamma_M,\gamma_N)
=d(I,\exp(\log(\gamma^{-1}_M\cdot\gamma_N)))
=\|\log(\gamma^{-1}_M\cdot\gamma_N)\|_{\text{op}}+O(\|\log(\gamma^{-1}_M\cdot\gamma_N)\|^2_{\text{op}}).$$
In view of Step 1, we then have
$$d(\gamma_M,\gamma_N)=\|\sigma_N-\sigma_M\|_{\text{op}}+O(\mathcal{V}_r^2(\sigma;[M,N])).$$
\noindent Given $\varepsilon>0$, choose an integer $R$ which is large enough, so that $\mathcal{V}_r(\gamma)\leq \mathcal{V}_r(\gamma;[-R,R])+\varepsilon$.
 For any partition $-R=N_0<N_1<\ldots<N_K=R$, we may crudely estimate  
 $$O(\mathcal{V}_r^2(\sigma;[N_j,N_{j+1}]))\leq \mathcal{V}_r(\sigma;[-R,R])O(\mathcal{V}_r(\sigma;[N_j,N_{j+1}])),$$ 
 and conclude, by taking the $\ell^r$ sum in the $j$ index, that
\begin{align*}
\Big(\sum_{j=0}^{K-1}d(\gamma_{N_{j+1}},\gamma_{N_j})^r\Big)^{\frac1r}
=\Big(\sum_{j=0}^{K-1} \|\sigma_{N_{j+1}}-\sigma_{N_{j}}\|_{\text{op}}^r\Big)^{\frac1r}+O(\mathcal{V}^2_r(\sigma;[-R,R])).
\end{align*}
Taking the suprema over all partitions and over $K$, and then letting $\varepsilon\to 0^+$, we obtain the result.
\noindent This concludes the proof of Step 2, and with it the local analysis is finished.  
The next and final step deals with the global part of the analysis.\\

\noindent{\it Step 3.} {\it Global analysis and conclusion of the proof.}
\vspace{.2cm}

\noindent Given $1<r<2$, let $\delta$ be the constant promised by Step 1.
Consider a potential $F$.  
The strategy is to decompose the curves $\gamma=\gamma[F]$ and $\sigma=\sigma(F)$ into finitely many small parts and large jumps, depending on $\delta$, each of which have some desirable properties. 
With this purpose in mind, consider the set of locations of the large jumps, $\mathbb{J}=\{n\in\Z: |F_n|>\frac{\delta}2\}$. The following bound for its cardinality, denoted $J=\# \mathbb{J}$, follows from Chebyshev's inequality:
\begin{equation}\label{Chebyshev}
J<\Big(\frac{2\|F\|_{\ell^r}}{\delta}\Big)^r.
\end{equation}
Let $n_1<n_2<\dots<n_J$ denote the elements of the set $\mathbb{J}$. For each $1\leq j\leq J$, set 
$\sigma_j^\ell=\sigma|_{[n_j-1,n_j]}$ 
and
$\gamma_j^\ell=\gamma|_{[n_j-1,n_j]}$. 
These induce natural decompositions 
\begin{align}
\gamma&=\gamma_1^s\oplus\gamma_1^\ell\oplus\gamma_2^s\oplus\gamma_2^\ell\oplus\ldots\oplus\gamma_{J}^s\oplus\gamma_{J}^\ell\oplus\gamma_{J+1}^s,\label{decomposegamma}\\
\sigma&=\sigma_1^s\oplus\sigma_1^\ell\oplus\sigma_2^s\oplus\sigma_2^\ell\oplus\ldots\oplus\sigma_{J}^s\oplus\sigma_{J}^\ell\oplus\sigma_{J+1}^s.\label{decomposesigma}
\end{align}
By the triangle inequality,
\begin{equation}\label{EstTri}
\mathcal{V}_r(\gamma)
\leq\sum_{j=1}^{J}\mathcal{V}_r(\gamma_j^\ell)+\sum_{j=1}^{J+1}\mathcal{V}_r(\gamma_j^s).
\end{equation}
Using  H\"older's inequality, estimate \eqref{Chebyshev}, and the fact that $\mathcal{V}_r(\gamma_j^\ell)=\frac 12\log(\frac{1+|F_{n_j}|}{1-|F_{n_j}|})$, we may estimate the contributions coming from the large jumps as follows:
\begin{equation}\label{EstLarge}
\sum_{j=1}^{J}\mathcal{V}_r(\gamma_j^\ell)
\leq J^{\frac1{r'}} \Big(\sum_{j=1}^{J}\mathcal{V}^r_r(\gamma_j^\ell)\Big)^{\frac1r}
\lesssim_r \|F\|_{\ell^r}^{r-1}\Big(\sum_{j=1}^J \mathcal{V}_r^r(\gamma_j^\ell)\Big)^{\frac 1r}
=\frac{\|F\|_{\ell^r}^{r-1}}2\Big(\sum_{n\in\mathbb{J}}\log\Big(\frac{1+|F_n|}{1-|F_n|}\Big)^r\Big)^{\frac 1r}.
\end{equation}
To estimate the contributions coming from the small jumps, consider the curve $\widetilde{\sigma}=\sigma(F{\bf 1}_{\Z\setminus\mathbb{J}})$ associated to the potential $F$ with the large jumps removed. 
We claim that the curve $\widetilde{\sigma}$ can be subdivided into $L=O(\delta^{-r} \mathcal{V}^r_r(\sigma))$ sub-curves $\{\widetilde{\sigma}_j\}_{j=1}^L$, in such a way that $\mathcal{V}_r(\widetilde{\sigma}_j)\leq \delta$, for every $1\leq j\leq L$. More precisely, each curve $\sigma_j^s$ admits
 a further decomposition 
$$\sigma_j^s=\bigoplus_{k=1}^{L_j}\sigma_{j,k}^s$$
with each $\mathcal{V}_r(\sigma_{j,k}^s)\leq \delta$, where $L:=\sum_{j=1}^{J+1} L_j=O(\delta^{-r} \mathcal{V}^r_r(\sigma))$. 
This can be seen via iterating a decomposition procedure similar to the one used in the proof of \cite[Lemma C.2]{OSTTW}.
The details are left to the reader.
By Step 2, the $r$-variation of the corresponding curves $\gamma_{j,k}^s$ satisfies
$$\mathcal{V}_r(\gamma_{j,k}^s)=\mathcal{V}_r(\sigma_{j,k}^s)+O(\mathcal{V}^2_r(\sigma_{j,k}^s)).$$
Applying the triangle inequality and H\"older's inequality as before, we estimate
\begin{align*}
\sum_{j=1}^{J+1}\mathcal{V}_r(\gamma_j^s)
&\leq
\sum_{j=1}^{J+1}\sum_{k=1}^{L_j}\mathcal{V}_r(\gamma_{j,k}^s)
=
\sum_{j=1}^{J+1}\sum_{k=1}^{L_j}\big(\mathcal{V}_r(\sigma_{j,k}^s)+O(\mathcal{V}^2_r(\sigma_{j,k}^s))\big)
\\
&\leq
L^{\frac{1}{r'}}\Big(\sum_{j=1}^{J+1}\sum_{k=1}^{L_j}\mathcal{V}^r_r(\sigma_{j,k}^s)\Big)^{\frac{1}{r}}
+CL^{\frac{1}{r'}}\Big(\sum_{j=1}^{J+1}\sum_{k=1}^{L_j}\mathcal{V}^{2r}_r(\sigma_{j,k}^s)\Big)^{\frac{1}{r}}.
\end{align*}
The reverse triangle inequality and the bound $\mathcal{V}_r(\sigma_{j,k}^s)\leq \delta$ then imply
$$\sum_{j=1}^{J+1}\mathcal{V}_r(\gamma_j^s)
\leq
L^{\frac 1{r'}}(1+C\delta) \mathcal{V}_r(\sigma).$$
Recalling that $L=O(\delta^{-r} \mathcal{V}^r_r(\sigma))$, and that $C,\delta$ depend only on $r$, we have that
\begin{equation}\label{gettingtoVrr}
\sum_{j=1}^{J+1}\mathcal{V}_r(\gamma_j^s)\lesssim_r \mathcal{V}_r^r(\sigma).
\end{equation}
Since $1<r<2$, we may then take an appropriate geometric mean to verify that
\begin{equation}\label{EstSmall}
\sum_{j=1}^{J+1}\mathcal{V}_r(\gamma_j^s)\leq \mathcal{V}_r(\sigma)+C_r (\mathcal{V}^2_r(\sigma)\wedge\mathcal{V}^r_r(\sigma)).
\end{equation}
Inequality \eqref{gammasigmaVr} follows from estimates \eqref{EstTri}, \eqref{EstLarge} and \eqref{EstSmall}, and this finishes the proof of the proposition.
\end{proof}

\noindent Proposition \ref{comparingvars} controls the $r$-variation of the curve $\gamma$ in terms of that of $\sigma$, plus an extra term that accounts for the large jumps.  
It is reasonable to ask whether, in the converse direction, the $r$-variation of $\sigma$ can be controlled by that of $\gamma$. 
An analysis of the previous proof reveals this to be the case, without the need for an extra term as before. We formulate this observation as our next result, which is going to play a role in extending the range of exponents for which Theorem \ref{VNLHY} holds.

\begin{corollary}\label{revcomparingvars}
Let $1\leq r<2$. Then there exists a constant $C_r<\infty$ such that
\begin{equation}\label{sigmagammaVr}
\mathcal{V}_r(\sigma)(z)\leq \mathcal{V}_r(\gamma)(z)+C_r(\mathcal{V}_r^2(\gamma)(z)\wedge\mathcal{V}_r^r(\gamma)(z)),
\end{equation}
 for every potential $F$
and every $z\in\T$.
\end{corollary}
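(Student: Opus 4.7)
The proof will closely mirror that of Proposition \ref{comparingvars}, with the roles of $\sigma$ and $\gamma$ swapped throughout. The key observation that will eliminate the extra jump term is the pointwise single-step comparison
\begin{equation*}
\|\sigma_n - \sigma_{n-1}\|_{\text{op}} = |F_n| \leq \tfrac{1}{2}\log\Big(\tfrac{1+|F_n|}{1-|F_n|}\Big) = d(\gamma_{n-1},\gamma_n),
\end{equation*}
valid for every $n\in\Z$, which follows from \eqref{opnormtransfermatrix} and the elementary inequality $x\leq\tfrac{1}{2}\log\bigl((1+x)/(1-x)\bigr)$ on $[0,1)$. This guarantees that each jump of $\sigma$ (small or large) is dominated by the corresponding jump of $\gamma$, so the asymmetry responsible for the extra term in \eqref{gammasigmaVr} disappears.

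The case $r=1$ is immediate: additivity of $\mathcal{V}_1$ (cf.\ \eqref{triangle}) combined with the above bound yields $\mathcal{V}_1(\sigma)(z)\leq\mathcal{V}_1(\gamma)(z)$, so \eqref{sigmagammaVr} holds with $C_1=0$. For $1<r<2$, the plan is to first carry out the local analysis by rerunning the induction-on-scale argument of Step~1 of Proposition~\ref{comparingvars} under the smallness hypothesis $\mathcal{V}_r(\gamma;[M,N])\leq\varepsilon$ in place of $\mathcal{V}_r(\sigma;[M,N])\leq\varepsilon$. The base case is handled by \eqref{unitstep} together with the single-step comparison, and the inductive dichotomy (good subdivision of $\gamma|_{[M,N]}$ versus a single dominant step) goes through verbatim; the only new ingredient is converting $\sigma$-quantities to $\gamma$-quantities in the intermediate bounds, which the single-step comparison permits. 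The outcome is
\begin{equation*}
\|\log(\gamma_M^{-1}\cdot\gamma_N)-(\sigma_N-\sigma_M)\|_{\text{op}}\leq K\mathcal{V}_r^2(\gamma;[M,N])
\end{equation*}
whenever $\mathcal{V}_r(\gamma;[M,N])\leq\delta$, and, by the analog of Step~2, $\mathcal{V}_r(\sigma)(z)=\mathcal{V}_r(\gamma)(z)+O_r(\mathcal{V}_r^2(\gamma)(z))$ whenever $\mathcal{V}_r(\gamma)(z)\leq\delta$.

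For the global analysis, I would imitate Step~3 of the proposition and decompose using the set of large jumps $\mathbb{J}=\{n:|F_n|>\delta/2\}$, with $J:=\#\mathbb{J}$. The crucial improvement over \eqref{Chebyshev} is that each $n\in\mathbb{J}$ now satisfies $d(\gamma_{n-1},\gamma_n)\gtrsim\delta$; evaluating $\mathcal{V}_r(\gamma)$ on the partition refining the single-step pieces at the large-jump locations then yields
\begin{equation*}
J\lesssim_r \delta^{-r}\mathcal{V}_r^r(\gamma),
\end{equation*}
a bound purely in terms of $\mathcal{V}_r(\gamma)$. The small-jump pieces are handled by the local result, giving $\sum_j\mathcal{V}_r(\sigma_j^s)\leq\mathcal{V}_r(\gamma)+C_r(\mathcal{V}_r^2(\gamma)\wedge\mathcal{V}_r^r(\gamma))$, while the large-jump contributions obey, via H\"older's inequality and the single-step comparison,
\begin{equation*}
\sum_{j=1}^J\mathcal{V}_r(\sigma_j^\ell)=\sum_{n\in\mathbb{J}}|F_n|\leq J^{1/r'}\Big(\sum_{n\in\mathbb{J}}d(\gamma_{n-1},\gamma_n)^r\Big)^{1/r}\lesssim_r \mathcal{V}_r^{r-1}(\gamma)\cdot\mathcal{V}_r(\gamma)=\mathcal{V}_r^r(\gamma).
\end{equation*}
Summing the two contributions via the triangle inequality for $\mathcal{V}_r$ yields \eqref{sigmagammaVr}.

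The principal obstacle is bookkeeping: verifying that all the intermediate quadratic-error estimates of Step~1 of Proposition~\ref{comparingvars} remain valid when the smallness hypothesis is imposed on $\gamma$ rather than on $\sigma$, and that the H\"older step for the large jumps absorbs cleanly into $\mathcal{V}_r^r(\gamma)$ thanks to the new bound on $J$. Once this is done, the asymmetry of the situation becomes transparent: the reverse inequality is governed by the same quadratic error $\mathcal{V}_r^2(\gamma)\wedge\mathcal{V}_r^r(\gamma)$ as \eqref{gammasigmaVr}, but the $\|F\|^{r-1}_{\ell^r}\|\log\bigl((1+|F_n|)/(1-|F_n|)\bigr)\|_{\ell^r}$ penalty present in the forward direction disappears.
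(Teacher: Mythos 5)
Your proposal is correct in substance and its global part coincides with the paper's: the paper also reduces matters to the analogue of Step~3, bounds the small-jump contributions by $\mathcal{V}_r^r(\gamma)$, and handles the large jumps exactly as you do, via H\"older together with the single-step comparison (in the paper this appears in the form $\|F\|_{\ell^r}\leq\mathcal{V}_r(\gamma)$, which turns \eqref{Chebyshev} into $J\lesssim_r\delta^{-r}\mathcal{V}_r^r(\gamma)$ and makes the two upper bounds coincide, so that the extra penalty term of \eqref{gammasigmaVr} disappears). Where you genuinely diverge is the local step. The paper does not rerun the induction on scale: it invokes the continuity (bootstrap) argument from the proof of \cite[Lemma C.3]{OSTTW}, which converts the already-established Step~2 --- valid under the hypothesis $\mathcal{V}_r(\sigma)(z)\leq\delta$ --- into the reversed statement $\mathcal{V}_r(\sigma)(z)=\mathcal{V}_r(\gamma)(z)+O_r(\mathcal{V}_r^2(\gamma)(z))$ under $\mathcal{V}_r(\gamma)(z)\leq\delta$; the point is that when $\mathcal{V}_r(\gamma)$ is small, $\mathcal{V}_r(\sigma)$ cannot jump past the threshold $\delta$ because it grows by at most $|F_n|\leq d(\gamma_{n-1},\gamma_n)$ per step and satisfies $\mathcal{V}_r(\gamma)=\mathcal{V}_r(\sigma)(1+O(\mathcal{V}_r(\sigma)))$ as long as it stays below $\delta$. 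Your alternative --- redoing the induction with the smallness hypothesis on $\gamma$ --- also works, and is arguably more self-contained (bounding $\|\log(\gamma_M^{-1}\gamma_L)\|_{\text{op}}$ directly by $O(\mathcal{V}_r(\gamma;[M,L]))$ is even slightly cleaner than the conversions in Step~1), at the cost of repeating the Bellman-type argument rather than quoting it.

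One point in your local step needs care: the base case. In the paper's Step~1 the induction is on the threshold $\varepsilon$, so the base case must assert the estimate for \emph{all} intervals $[M,N]$ with $\mathcal{V}_r(\gamma;[M,N])<\varepsilon_0$, not merely for one-step intervals, and \eqref{unitstep} alone does not provide this. Moreover, you cannot simply transfer the paper's base case by writing $\mathcal{V}_r(\sigma;[M,N])\leq\sum_{M<n\leq N}|F_n|$, since that would make the constant $K$ (not just the threshold) depend on $F$ and break the ladder that removes the $F$-dependence. The clean fix is to choose $\varepsilon_0$ smaller than the least nonzero value of $|F_n|$ (recall $d(\gamma_{n-1},\gamma_n)\geq|F_n|$), so that $\mathcal{V}_r(\gamma;[M,N])<\varepsilon_0$ forces $F$ to vanish on $(M,N]$ and the base case is trivial, with $K$ depending only on $r$; the inductive claim then proceeds as you describe. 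With that repair, and with the small-jump pieces subdivided so that each has $\gamma$-variation at most $\delta$ (possible off $\mathbb{J}$, with count $O(\delta^{-r}\mathcal{V}_r^r(\gamma))$), your argument gives \eqref{sigmagammaVr}.
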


\begin{proof}
As observed in the course of the proof of \cite[Lemma C.3]{OSTTW} via a continuity argument, the fact that estimate \eqref{step3} holds provided $\mathcal{V}_r(\sigma)(z)\leq\delta$, for sufficiently small $\delta$, implies
$$\mathcal{V}_r(\sigma)(z)= \mathcal{V}_r(\gamma)(z) +O_r(\mathcal{V}^2_r(\gamma)(z)),$$
provided $\mathcal{V}_r(\gamma)(z)\leq\delta$.
One is thus left with establishing the analogue of Step 3 of the proof of Proposition \ref{comparingvars}.
The estimate of the contributions from the small jumps is analogous and yields
\begin{equation}\label{gettingtoVrrGamma}
\sum_{j=1}^{J+1}\mathcal{V}_r(\sigma_j^s)\lesssim_r \mathcal{V}_r^r(\gamma).
\end{equation}
Since $\|F\|_{\ell^r} 
\leq \mathcal{V}_r(\gamma)$, we can estimate the contributions from the large jumps as follows:
\begin{equation}\label{SigmaEstLarge}
\sum_{j=1}^{J}\mathcal{V}_r(\sigma_j^\ell)
\lesssim_r \|F\|_{\ell^r}^{r-1}\Big(\sum_{j=1}^J \mathcal{V}_r^r(\sigma_j^\ell)\Big)^{\frac 1r}
={\|F\|_{\ell^r}^{r-1}}\Big(\sum_{n\in\mathbb{J}}|F_n|^r\Big)^{\frac 1r}
\leq \|F\|_{\ell^r}^r\leq \mathcal{V}_r^r(\gamma).
\end{equation}
The upper bounds given by \eqref{gettingtoVrrGamma} and \eqref{SigmaEstLarge} coincide, and the argument can be finished as before to yield \eqref{sigmagammaVr}.
\end{proof}

\section{A discrete variational Menshov--Paley--Zygmund theorem}\label{sec:VMPZ}

This section is devoted to controlling the variation of the linearized curve $\sigma(F)$ in terms of the potential $F$. The following result should be compared to the classical Menshov--Paley--Zygmund theorem on the torus.

\begin{proposition}\label{varMPZ}
Let $1\leq p<2$ and $r>p$. Then there exists a constant $C_{p,r}<\infty$ such that
\begin{equation}\label{vMPZineq}
\|\mathcal{V}_r(\sigma(F))\|_{\textup{L}^{p'}(\T)}\leq C_{p,r}\|F\|_{\ell^p(\Z)},
\end{equation}
for every $F\in\ell^p(\Z,\D)$.
\end{proposition}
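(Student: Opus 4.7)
My plan is to adapt the stopping-time decomposition of Christ--Kiselev \cite{CK2} to the variational setting. The first reduction is to a scalar problem: since the matrix $\sigma_N(z)-\sigma_M(z)$ is off-diagonal with entry $S_{M,N}F(z):=\sum_{M<n\leq N}F_n z^n$, identity \eqref{opnorm} yields $\|\sigma_N-\sigma_M\|_{\text{op}}=|S_{M,N}F(z)|$, so $\mathcal{V}_r(\sigma(F))(z)$ coincides with the $r$-variation of the scalar partial Fourier sums. The endpoint $p=1$ follows immediately from \eqref{nestedVr} together with $\mathcal{V}_1(\sigma)(z)\leq\sum_n|F_n|$. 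For $1<p<2$, a further application of \eqref{nestedVr} lets me assume $r\leq p'$ (larger $r$ gives a smaller variation). A standard measurable selection argument then reduces \eqref{vMPZineq} to verifying
\[
\Big\|\Big(\sum_j|S_{N_j(z),N_{j+1}(z)}F(z)|^r\Big)^{1/r}\Big\|_{\textup{L}^{p'}(\T)}\lesssim_{p,r}\|F\|_{\ell^p(\Z)}
\]
uniformly over measurable partitions $z\mapsto\{N_0(z)<\ldots<N_{K(z)}(z)\}$.

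After normalizing $\|F\|_{\ell^p}=1$, I would construct a Christ--Kiselev binary tree of intervals $\{I_{k,i}\}_{k\geq 0,\, 0\leq i<2^k}$ with $I_{0,0}\supset\text{supp}(F)$ and children chosen by bisecting the $\ell^p$-mass of $F$, so that $\|F{\bf 1}_{I_{k,i}}\|_{\ell^p}^p\lesssim 2^{-k}$. The classical fact that any interval admits a canonical decomposition into complete dyadic tree intervals with at most two per level (one ``left'' and one ``right'') produces, for each $z$ and each $j$, a splitting
\[
S_{N_j,N_{j+1}}F(z)=\sum_{k\geq 0}\sum_{\epsilon\in\{L,R\}}S\bigl(F{\bf 1}_{I_{k,i_j^{(k),\epsilon}(z)}}\bigr)(z),
\]
where $S(F{\bf 1}_I)(z):=\sum_{n\in I}F_n z^n$ and some pieces may be empty. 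The crucial disjointness property holds: for each fixed $k$ and $\epsilon$, the indices $i_j^{(k),\epsilon}(z)$ are pairwise distinct as $j$ varies, because disjoint partition intervals produce disjoint complete dyadic pieces at each level of the tree.

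Two successive applications of Minkowski's inequality---first in the $\ell^r$-variation over $j$, and then in $\textup{L}^{p'}$ using the assumption $r\leq p'$---combined with the above disjointness give
\[
\Big\|\Big(\sum_j|S_{N_j,N_{j+1}}F|^r\Big)^{1/r}\Big\|_{\textup{L}^{p'}}\leq 2\sum_{k\geq 0}\Big(\sum_i\|S(F{\bf 1}_{I_{k,i}})\|^r_{\textup{L}^{p'}}\Big)^{1/r}.
\]
The linear Hausdorff--Young inequality then gives $\|S(F{\bf 1}_{I_{k,i}})\|_{\textup{L}^{p'}}\lesssim_p\|F{\bf 1}_{I_{k,i}}\|_{\ell^p}\leq 2^{-k/p}$. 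Since $\sum_i\|F{\bf 1}_{I_{k,i}}\|_{\ell^p}^p=1$, the power-mean inequality (valid precisely because $r>p$) produces the per-level bound $\bigl(\sum_i\|F{\bf 1}_{I_{k,i}}\|_{\ell^p}^r\bigr)^{1/r}\lesssim 2^{-k(1/p-1/r)}$. Summing the resulting geometric series over $k$ converges exactly when $r>p$, which is the sharp threshold matching the discussion following Theorem \ref{VNLHY}.

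The main obstacle I anticipate is the bookkeeping required to ensure that the dyadic decomposition of each partition interval can be carried out measurably in $z$ while preserving the disjointness property across $j$ at every fixed level. Once this is in place, the proof reduces to the clean application of Minkowski, Hausdorff--Young, and geometric summation outlined above, with all the hypotheses---$p<2$ providing the Hausdorff--Young range and $r>p$ providing geometric summability---playing explicit roles.
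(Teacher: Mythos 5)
Your overall skeleton---reduce to the scalar partial-sum process via \eqref{opnorm}, run a Christ--Kiselev mass decomposition, apply Minkowski twice (using $r\le p'$), apply the linear Hausdorff--Young inequality on each piece, and sum a geometric series using $r>p$---is precisely the argument the paper uses (it appears there as the alternative proof of Lemma \ref{B1}). The gap lies in the one place where you diverge from the paper: the construction of the tree directly on $\Z$. After normalizing $\|F\|_{\ell^p}=1$, you cannot in general choose children ``by bisecting the $\ell^p$-mass'' so that $\|F{\bf 1}_{I_{k,i}}\|_{\ell^p}^p\lesssim 2^{-k}$: a single heavy coefficient obstructs this. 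If, say, $|F_0|^p=0.9$, then every tree interval containing the index $0$ has mass at least $0.9$ at every level, so the stated bound fails for all large $k$, and with it the per-level estimate $\bigl(\sum_i\|F{\bf 1}_{I_{k,i}}\|_{\ell^p}^r\bigr)^{1/r}\lesssim 2^{-k(1/p-1/r)}$ that drives the geometric summation. (By contrast, the measurability issue you single out as the main obstacle is harmless: the $r$-variation is a supremum over the countable family of finite integer partitions, so no genuine measurable selection is needed.)

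Two standard repairs are available. The paper's own route is to transfer to the line \emph{before} decomposing: set $f=\sum_n F_n{\bf 1}_{[n,n+1)}$ and $K(x,y)=e^{ix[y]}{\bf 1}_{[0,2\pi]}(x)$, so that the truncated operator $T_{\le}f(x,N)$ reproduces the partial sums, while the cumulative mass $\varphi(t)=\int_{-\infty}^t|f|^p$ is continuous; exact bisection $\int_{I^n_k}|f|^p=2^{-n}$ is then possible and your argument goes through verbatim. Alternatively, you can stay on $\Z$ but split each tree interval as (left part) $\cup$ (median singleton) $\cup$ (right part), with each side carrying at most half of the parent's mass; then intervals at depth $k$ have mass at most $2^{-k}$, the singleton pieces are estimated directly via $|S(F{\bf 1}_{\{m\}})(z)|=|F_m|=\mu(\{m\})^{1/p}$ with $\mu(\{m\})\le 2^{-(k-1)}$ for a median created at depth $k$, and their level-$k$ contribution is again $O(2^{-k(1/p-1/r)})$ because $r>p$, so the series still converges. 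With either fix (and keeping your disjointness-in-$j$ observation, which is correct and is the same one the paper exploits), the proof is complete.
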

\noindent The condition $r>p$ is sharp, except at the endpoint $p=1$. 
In this case, if $p=r=1$, then inequality \eqref{vMPZineq} turns into an equality with $C_{1,1}= 1$. 
In the general case, \eqref{vMPZineq} can only hold if $r\geq p$, as can easily  be seen by considering  truncated potentials given by $F_n=\frac 12$ if $|n|\leq M$, and $F_n=0$ otherwise. 
These of course coincide with (a multiple of) the Fourier transform of the Dirichlet kernel, defined for $M\geq 1$ to be
$${D}_M(x)=\sum_{n=-M}^{M} e^{inx}=\frac{\sin((M+\frac12)x)}{\sin \frac x2}.$$
Moreover, if $p>1$, then $r>p$ is a necessary condition for \eqref{vMPZineq} to hold.
This follows from an adaptation of the argument in 
\cite[\S 2]{OSTTW}
by testing \eqref{vMPZineq} against the Fourier transform of the de la Vall\'ee-Poussin kernel. If
${K}_M=(M+1)^{-1}\sum_{n=0}^M {D}_n$
denotes the Fej\'er kernel, then set ${V}_M=2{K}_{2M+1}-{K}_M$, and define the potential {$F^M\in\ell^p$ via}
\begin{displaymath}
2F^M_n
=\frac 1{2\pi}\int_0^{2\pi} {V}_M(x) e^{-inx}\d x
= \left\{ \begin{array}{ll}
1, & \textrm{if $|n|\leq M+1$},\\
\frac{2M+2-|n|}{M+1},& \textrm{if $M+1\leq|n|\leq 2M+2$},\\
0, & \textrm{if $|n|\geq 2M+2$}.\\
\end{array} \right.
\end{displaymath}
Since $\|F^M\|_{\ell^p}=O_p(M^{\frac1p})$, a computation analogous to the one in \cite[\S 2]{OSTTW} shows that, for sufficiently \mbox{large $M$,}
$$\frac{\|\mathcal{V}_p(\sigma(F^M))\|_{\textup{L}^{p'}}}{\|F^M\|_{\ell^{p}}}\geq c_p(\log M)^{\frac1{p'}},$$
hence the claimed necessity.
 For small values of $|F_n|$, the nonlinear Fourier transform is well approximated by the linear 
 Fourier transform, as can be seen by linearizing in $F$. It follows from this general principle and the discussion in the last paragraph that the condition $r>p$ is necessary for Theorem \ref{VNLHY} to hold, at least in the interesting case when $p>1$.

\noindent It seems possible to adapt the high-powered methods from \cite{OSTTW} to establish the case $p=2$ of Proposition \ref{varMPZ}, at which point the result would follow from interpolation with a trivial estimate at $p=1$. However, the purpose of this section is to provide an elementary proof when $p<2$ based on embedding $\ell^p(\Z)$ into $\textup{L}^p(\R)$.

\begin{proof}[Proof of Proposition \ref{varMPZ}]
For $x,y\in\R$, consider the kernel
\begin{displaymath}
K(x,y) = \left\{ 
\begin{array}{ll}
e^{ix[y]} & \textrm{if $x\in [0,2\pi]$,}\\
0 & \textrm{otherwise,}
\end{array} \right.
\end{displaymath}
where $[y]$ denotes the largest integer smaller than $y$. The associated integral operator is
$$Tf(x)=\int_\R K(x,y)f(y)\d y.$$
Since $1\leq p<2$, the operator $T$ is bounded from $\textup{L}^p(\R)$ to $\textup{L}^{p'}(\R)$, with operator norm satisfying $\|T\|_{p, p'}\leq (2\pi)^{\frac1{p'}}$. To verify this, consider the averages 
$\langle f\rangle_n=\int_n^{n+1}f(y)\d y$. 
Identifying $z=e^{ix}$, we have that
\begin{equation*}
\frac1{2\pi}\int_\R |Tf(x)|^{p'}\d x
=\int_\T\Big|\sum_{n\in\Z}\langle f\rangle_n z^{n}\Big|^{p'}
=\|{\{\langle f\rangle_n\}}\;\widecheck{}\;\|_{\textup{L}_z^{p'}(\T)}^{p'}.
\end{equation*}
By the (dual) Hausdorff--Young inequality on $\T$ and H\"{o}lder's inequality,
$$\frac1{2\pi}\int_\R |Tf(x)|^{p'}\d x\leq \|\{\langle f\rangle_n\}\|_{\ell^p(\Z)}^{p'}\leq \|f\|_{\textup{L}^p(\R)}^{p'},$$

\noindent and the claimed boundedness of $T$ follows. We proceed to define the truncated operators
$$T_{\leq}f(x,N)=\int_{-\infty}^NK(x,y)f(y)\d y.$$

\noindent Given a sequence $F\in\ell^p(\Z,\D)$, we construct a companion function $f\in \textup{L}^p(\R,\D)$ via 
$$f=\sum_{n\in\Z}F_n\mathbf{1}_{[n,n+1)}.$$
For every $0\leq x\leq 2\pi$ and $N\in\Z$, we have that
$$T_{\leq}f(x,N)=\sum_{n<N} F_n e^{inx}.$$
Writing $z=e^{ix}$ as before, it follows that
\begin{equation}\label{sigmaT}
\|\mathcal{V}_r(\sigma(F))(z)\|_{\textup{L}_z^{p'}(\T)}
=\|\mathcal{V}_r(T_{\leq}f) (x)\|_{\textup{L}_x^{p'}([0,2\pi])}.
\end{equation}
Since $1\leq p<\min\{p',r\}$, we are in a position to apply the following result, which is an immediate consequence of \mbox{\cite[Lemma B.1]{OSTTW}.}  

\begin{lemma}\label{B1} 
Under the same assumptions on the exponents $p, r$, there exists a constant $C_{p,r}<\infty$ such that
\begin{equation}\label{Tf}
\|\mathcal{V}_r(T_{\leq}g)\|_{\textup{L}^{p'}(\R)}\leq C_{p,r}\|T\|_{p, p'}\|g\|_{\textup{L}^p(\R)},
\end{equation}
for every $g\in \textup{L}^p(\R).$
\end{lemma}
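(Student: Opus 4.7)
The plan is to adapt the classical Christ--Kiselev lemma to the variational setting, noting that the assumption $1\leq p<\min\{p',r\}$ gives us both the crucial gap $p<p'$ (which drives the Christ--Kiselev machinery) and the slack $p<r$ (which makes the resulting geometric series converge). First I would normalize so that $\|g\|_{\textup{L}^p(\R)}=1$ and introduce the probability measure $\textup{d}\mu=|g|^p\,\textup{d}y$ on $\R$. Using this measure I would build a complete binary tree of intervals, where each node at depth $k$ corresponds to an interval $I$ with $\mu(I)=2^{-k}$, obtained by recursively bisecting the real line at the median of $\mu$.

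Next I would linearize the $r$-variation pointwise by dualizing against a sequence $\{\beta_j(x)\}_j$ with $\|\beta(x)\|_{\ell^{r'}}\leq 1$ and partition-selection functions $N_j(x)$. Given any partition $N_0<N_1<\cdots<N_K$, for each jump $(N_j,N_{j+1}]$ I would record its \emph{level} $k=k(j)$ determined by $\mu((N_j,N_{j+1}])\approx 2^{-k}$. Since at every level $k$ the relevant jump intervals are $\mu$-disjoint with mass $\sim 2^{-k}$, at most $O(2^k)$ jumps can live at level $k$. The Christ--Kiselev observation then allows each increment $T_{\leq}g(x,N_{j+1})-T_{\leq}g(x,N_j)=T(g\mathbf{1}_{(N_j,N_{j+1}]})(x)$ to be rewritten, up to two ``boundary'' pieces belonging to strictly deeper levels, as $T(g\mathbf{1}_I)(x)$ for a dyadic node $I$ of level $k$. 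This produces a tree-telescoping of the full variation.

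Third, I would apply the triangle inequality in $\textup{L}^{p'}(\R)$ across levels, and apply $\|T\|_{p,p'}$ to each dyadic piece $g\mathbf{1}_I$, which has $\|g\mathbf{1}_I\|_{\textup{L}^p}=2^{-k/p}$. Grouping the $O(2^k)$ contributions at level $k$ inside the $\ell^r$-sum of the linearized variation and pulling the $\textup{L}^{p'}$ norm inside via Minkowski (since $p'\geq r$ if one first reduces to this range, or via a direct $\ell^r$-valued extension argument otherwise), one obtains at each level a bound of the form $\|T\|_{p,p'}\cdot 2^{k/r}\cdot 2^{-k/p}$. Summing in $k\geq 0$ yields a convergent geometric series precisely because $\tfrac1r<\tfrac1p$, i.e.\ $r>p$, with a finite sum depending only on $p$ and $r$.

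The main obstacle is the careful bookkeeping of the boundary pieces, since the partition endpoints $N_j$ of a generic increasing sequence do not align with the dyadic tree built from $\mu$. One has to verify that each boundary residue can be charged to a deeper level without destroying the $2^k$ upper bound on the number of jumps per level, and that the selection functions $N_j(x)$ and the dualizing sequence $\beta_j(x)$ can be measurably linearized so that the dyadic tree argument applies uniformly in $x$. Once this bookkeeping is in place, the estimate collapses to the convergence of the geometric series above, giving \eqref{Tf} with an admissible constant $C_{p,r}$.
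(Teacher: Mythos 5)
Your proposal is correct and follows essentially the same route as the paper's alternative proof of Lemma \ref{B1}: a Christ--Kiselev-type dyadic decomposition built from the distribution function of $|g|^p$, the $\textup{L}^p\to \textup{L}^{p'}$ bound of $T$ applied to each piece of mass $2^{-n}$, and the geometric series $\sum_n 2^{n(\frac1r-\frac1p)}$, convergent precisely because $r>p$. The paper's bookkeeping is lighter, though: it decomposes every jump over all levels at once via the binary expansion of $\varphi(t)=\int_{-\infty}^t|g|^p$, so that distinct jumps select distinct dyadic nodes at each level and the $\ell^r$-sum over jumps is simply dominated by the full sum over the $2^n$ nodes of that level, which removes any need for dualization, measurable selection, or counting how many jumps live at a given level.
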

\noindent Since $\|T\|_{p, p'}\leq (2\pi)^{\frac1{p'}}$ and $\|f\|_{\textup{L}^p(\R)}=\|F\|_{\ell^p(\Z)}$, estimates \eqref{sigmaT} and \eqref{Tf} with $g=f$ imply inequality \eqref{vMPZineq}, concluding the proof of the proposition.
\end{proof}

\noindent Lemma \ref{B1} is proved in \cite{OSTTW} via a bootstrap argument which uses the linear Christ--Kiselev lemma as a black box.
We now present an alternative approach which follows the original argument used to establish \cite[Theorem 1.1]{CK2}. 

\begin{proof}[Alternative proof of Lemma \ref{B1}]
Without loss of generality, we may assume that $\|g\|_{\textup{L}^p(\R)}=1$ and $r<p'$.
For $t\in\R$, 
define the function 
$$\varphi(t)=\int_{-\infty}^t |g(x)|^p \d x.$$
Since $\varphi$ is continuous, non-decreasing, and runs from 0 to 1, we may define points $\{t_k^n\}$ indexed by  $n\in\N$ and $1\leq k< 2^n$ via
$$\varphi(t_k^n)=\frac{k}{2^n},$$
where $t_k^n$ is the smallest such solution if $\varphi$ has a flat piece.
In this range of $n,k$, define the intervals $I_k^n=(t_{k-1}^n,t_k^n]$, and further set 
$I_{2^n}^n=\R\setminus \bigcup_{1\leq k< 2^n} I_k^n$. 
One easily checks, for every $n,k$, that
\begin{equation}\label{normfchi2}
\int_{I_k^n} |g(x)|^p \d x=2^{-n}.
\end{equation}
Given $t\in \R$, write $\varphi(t)=\sum_{n=1}^\infty j_n 2^{-n}$ with $j_n\in\{0,1\}$ as  a binary expansion. If more than one such expansion exists, choose one.
Define $k_n(t)=\sum_{\ell=1}^n j_\ell 2^{n-\ell}$. By continuity of the function $\varphi$, we have that
$$g{\bf 1}_{(-\infty,t]}=\sum_{\{n:\; j_n=1\}} g{\bf 1}_{I_{k_n(t)}^n},$$
as functions in $\textup{L}^p$. Proceed similarly for intervals of the form $[s,\infty)$. It follows that, for any reals $N_j<N_{j+1}$, 
$$|T_\leq g(x,N_{j+1})-T_\leq g(x,N_j)|
=|T(g\mathbf{1}_{[N_j,N_{j+1}]})(x)|
\leq 2\sum_{n=1}^\infty \sup_k |T(g \mathbf{1}_{I_k^n})(x)|,$$
where, for each $n$, the supremum in the last summand is taken over all $1\leq k\leq 2^n$ such that  $I_k^n\subset [N_j,N_{j+1}]$. As a consequence,
\begin{multline*}
\Big(\sum_{j=0}^{K-1} |T_\leq g(x,N_{j+1})-T_\leq g(x,N_j)|^r\Big)^{\frac1r}
\leq
2 \Big(\sum_{j=0}^{K-1}\Big(\sum_{n=1}^\infty \sup_k |T(g \mathbf{1}_{I_k^n})(x)|\Big)^r\Big)^{\frac1r}\\
\leq
2 \sum_{n=1}^{\infty}\Big(\sum_{j=0}^{K-1} \sup_k |T(g \mathbf{1}_{I_k^n})(x)|^r\Big)^{\frac1r}
\leq
2 \sum_{n=1}^{\infty}\Big(\sum_{k=1}^{2^n} |T(g \mathbf{1}_{I_k^n})(x)|^r\Big)^{\frac1r},
\end{multline*}
where the last two inequalities are a consequence of Minkowski's inequality and the fact that,
for each $n$,  $\{I_k^n: 1\leq k\leq 2^n\}$ is a partition of $\R$ into disjoint subintervals.
Hence
\begin{multline*}
\|\mathcal{V}_r(T_{\leq}g)(x)\|_{L_x^{p'}(\R)}
\leq
2\Big(\int_\R \Big|\sum_{n=1}^{\infty}\Big(\sum_{k=1}^{2^n} |T(g \mathbf{1}_{I_k^n})(x)|^r\Big)^{\frac 1r}\Big|^{p'}\d x\Big)^{\frac 1{p'}}\\
\leq
2\sum_{n=1}^\infty\Big(\int_\R\Big(\sum_{k=1}^{2^n} |T(g\mathbf{1}_{I_k^n})(x)|^r\Big)^{\frac{p'}r}\d x\Big)^{\frac1{p'}}
\leq
2\sum_{n=1}^\infty\Big(\sum_{k=1}^{2^n}\Big(\int_\R |T(g \mathbf{1}_{I_k^n})(x)|^{p'} \d x\Big)^{\frac r{p'}}\Big)^{\frac 1r},
\end{multline*}
where Minkowski's integral inequality was used to interchange the $\ell^r$ and the $\textup{L}^{p'}$ norms (recall that $r<p'$). Since the operator $T$ is bounded from $\textup{L}^p$ to $\textup{L}^{p'}$, we appeal to \eqref{normfchi2} to further estimate 
$$\|\mathcal{V}_r(T_{\leq}g)(x)\|_{\textup{L}_x^{p'}(\R)}
\leq
2\|T\|_{p,p'}\sum_{n=1}^\infty\Big(\sum_{k=1}^{2^n} 2^{-\frac{nr}p}\Big)^{\frac 1r}.$$
Since $r>p$, the geometric series converges, concluding the proof of the lemma
with $C_{p,r}=2(2^{\frac1p-\frac1r}-1)^{-1}$.
\end{proof}

\section{Proof of the main theorem}\label{sec:corproofs}

Armed with Propositions \ref{comparingvars} and \ref{varMPZ}, it is now an easy matter to finish the proof of the main theorem.
\begin{proof}[Proof of Theorem \ref{VNLHY}]
Let $1\leq p<2$ and $r>p$.
We deal with the case $r<2$ first.
In this case, given a potential  $F$, we start by considering the set 
$$\mathbb{S}=\{z\in\T: \mathcal{V}_r(\gamma[F])(z)\leq 1\},$$ 
defined in \eqref{defS}. 
For any point $z\in\mathbb{S}$, we have that $\mathcal{V}_r(\gamma[F])(z)\leq 1$, and therefore $\mathcal{V}_r(\sigma[F])(z)\lesssim_r 1$ in view of Corollary \ref{revcomparingvars}. 
As observed before, this implies $\|F\|_{\ell^r}\lesssim_r 1$.
From Proposition \ref{comparingvars}, we obtain 
$$\mathcal{V}_r(\gamma[F])(z)\lesssim_r \mathcal{V}_r(\sigma(F))(z)
+ \Big\|\log\Big(\frac{1+|F_n|}{1-|F_n|}\Big)\Big\|_{\ell^r}.$$
From Proposition \ref{varMPZ}, the elementary inequality $x\leq\frac 12\log(\frac{1+x}{1-x})$, and the inclusion $\ell^p\subset \ell^r$, it follows that 
\begin{equation}\label{firstonlyr<2}
\|\mathcal{V}_r(\gamma[F])\|_{\textup{L}^{p'}(\mathbb{S})}
\lesssim_{p,r}\Big\|\log\Big(\frac{1+|F_n|}{1-|F_n|}\Big)\Big\|_{\ell^p(\Z)}.
\end{equation}
We now consider the complementary set $\T\setminus \mathbb{S}$.
Given $z\in\T\setminus \mathbb{S}$, estimate \eqref{gammasigmaVr} implies
 $\mathcal{V}_r(\sigma[F])(z)\gtrsim_r 1$, and therefore
$$\mathcal{V}_r(\gamma[F])(z)\lesssim_r \mathcal{V}^r_r(\sigma(F))(z)+ \Big\|\log\Big(\frac{1+|F_n|}{1-|F_n|}\Big)\Big\|^r_{\ell^r}.$$
This leads to
$$\|\mathcal{V}_r(\gamma[F])\|^{\frac1r}_{\textup{L}^{\frac{p'}r}(\T\setminus\mathbb{S})}
\lesssim_r 
\|\mathcal{V}^r_r(\sigma(F))\|^{\frac1r}_{\textup{L}^{\frac{p'}r}(\T)}+ \Big\|\log\Big(\frac{1+|F_n|}{1-|F_n|}\Big)\Big\|_{\ell^r(\Z)}.$$
Another application of Proposition \ref{varMPZ} shows that 
$$\|\mathcal{V}^r_r(\sigma(F))\|^{\frac1r}_{\textup{L}^{\frac{p'}r}(\T)}=\|\mathcal{V}_r(\sigma(F))\|_{\textup{L}^{p'}(\T)}\lesssim_{p,r} \|F\|_{\ell^p(\Z)},$$
and so, reasoning as before,
\begin{equation}\label{onlyr<2}
\|\mathcal{V}_r(\gamma[F])\|^{\frac1r}_{\textup{L}^{\frac{p'}r}(\T\setminus\mathbb{S})}
\lesssim_{p,r}\Big\|\log\Big(\frac{1+|F_n|}{1-|F_n|}\Big)\Big\|_{\ell^p(\Z)}.
\end{equation}
Estimates \eqref{firstonlyr<2} and \eqref{onlyr<2} yield the desired inequality \eqref{mainineq1}, as long as $r<2$. 

In order to extend it to values $r\geq 2$, we proceed as follows. 
Fix $1\leq p<2$ and $r\geq 2$.
Set $s=\frac{p+2}2$ and $\mathbb{S}=\{z\in\T: \mathcal{V}_s(\gamma[F])(z)\leq 1\}$. 
Since $s<2$, the preceding discussion implies
$$\|\mathcal{V}_s(\gamma[F])\|_{\textup{L}^{p'}(\mathbb{S})}
+\|\mathcal{V}^{\frac 1s}_s(\gamma[F])\|_{\textup{L}^{p'}(\T\setminus\mathbb{S})}
\lesssim_{p}\Big\|\log\Big(\frac{1+|F_n|}{1-|F_n|}\Big)\Big\|_{\ell^p(\Z)}.$$
If $z\in\mathbb{S}$, then it suffices to recall that the $r$-variation is decreasing in the exponent $r$:
$$\mathcal{V}_r(\gamma[F])(z)\leq \mathcal{V}_s(\gamma[F])(z)$$
If $z\in\T\setminus\mathbb{S}$, then we may estimate:
$$
\mathcal{V}^{\frac1r}_r(\gamma[F])(z)
\leq \mathcal{V}^{\frac1r}_s(\gamma[F])(z)
\leq \mathcal{V}^{\frac1s}_s(\gamma[F])(z),$$ 
where the first inequality follows from the fact that the $r$-variation is decreasing in the exponent $r$, and
the second inequality follows from the fact that $\mathcal{V}_s(\gamma[F])(z)> 1$.

As a consequence, inequality \eqref{mainineq1} holds for arbitrary potentials,
in the full range $1\leq p<2$ and $r>p$.
Since the implicit constant depend only on the exponents $p$ and $r$, the result for general $\ell^p$ sequences follows.
The proof of the theorem is complete.
\end{proof}

\section*{Acknowledgments}
The author would like to thank Christoph Thiele for suggesting the problem and for several stimulating discussions. He is also indebted to Vjeko Kova\v{c}, Jo\~{a}o Pedro Ramos and Gena Uraltsev for various comments and suggestions, and to Jim Wright for an enlightening conversation and a copy of an unpublished joint work with Sandy Davie.


\begin{thebibliography}{03}

\bibitem{CK} Michael Christ and  Alexander Kiselev,
\emph{Absolutely continuous spectrum for one-dimensional Schr\"odinger operators with slowly decaying potentials: some optimal results.}
J. Amer. Math. Soc. {\textbf{11}} (1998), no.~4, 771--797.

\bibitem{CK2} Michael Christ and  Alexander Kiselev,
\emph{Maximal functions associated to filtrations.}
J. Funct. Anal. {\textbf{179}} (2001), no.~2, 409--425.

\bibitem{DMT} Yen Do, Camil Muscalu and Christoph Thiele,
\emph{Variational estimates for paraproducts.}
Rev. Mat. Iberoam. {\textbf {28}} (2012), no.~3, 857--878. 

\bibitem{DMT2} 
Yen Do, Camil Muscalu and Christoph Thiele,
\emph{Variational estimates for the bilinear iterated Fourier integral.}
J. Funct. Anal. {\textbf{272}} (2017), no.~5, 2176--2233. 

\bibitem{JSW}
Roger L. Jones, Andreas Seeger and James Wright,
\emph{Strong variational and jump inequalities in harmonic analysis.}
Trans. Amer. Math. Soc. {\bf 360} (2008), no.~12, 6711--6742. 

\bibitem{K} Vjekoslav Kova\v{c},
\emph{Uniform constants in Hausdorff--Young inequalities for the Cantor group model of the scattering transform.} 
Proc. Amer. Math. Soc. {\textbf{140}} (2012), no.~3, 915--926. 

\bibitem{KOSR} Vjekoslav Kova\v{c}, Diogo Oliveira e Silva and Jelena Rup\v{c}i\'c,
\emph{A sharp nonlinear Hausdorff--Young inequality for small potentials.}
Preprint (2017), available at arXiv:1703.05557.

\bibitem{LL}Allison Lewko and Mark Lewko, 
\emph{Estimates for the square variation of partial sums of Fourier series and their rearrangements.}
J. Funct. Anal. {\bf 262} (2012), no.~6, 2561--2607. 

\bibitem{Ly} Terry Lyons,
\emph{Differential equations driven by rough signals. I. An extension of an inequality of L. C. Young.}
Math. Res. Lett. {\textbf{1}} (1994), no.~4, 451--464. 

\bibitem{Ly2} Terry Lyons,
\emph{Differential equations driven by rough signals.}
Rev. Mat. Iberoam. {\textbf{14}} (1998), no.~2, 215--310. 
 
\bibitem{LyY} Terry Lyons and Danyu Yang,
\emph{The partial sum process of orthogonal expansions as geometric rough process with Fourier series as an example--an improvement of Menshov--Rademacher theorem.} 
J. Funct. Anal. {\bf 265} (2013), no.~12, 3067--3103. 

\bibitem{MTT} Camil Muscalu, Terence Tao and  Christoph Thiele,
\emph{A counterexample to a multilinear endpoint question of Christ and Kiselev.}
Math. Res. Lett. {\textbf{10}} (2003), no.~2-3, 237--246.

\bibitem{MTT2} Camil Muscalu, Terence Tao and  Christoph Thiele,
\emph{A Carleson theorem for a Cantor group model of the scattering transform.}
Nonlinearity {\textbf{16}} (2003), no.~1, 219--246. 

\bibitem{OSTTW} 
Richard Oberlin, Andreas Seeger, Terence Tao, Christoph Thiele and James Wright,
\emph{A variation norm Carleson theorem.}
J. Eur. Math. Soc. (JEMS) {\textbf{14}} (2012), no.~2, 421--464.

\bibitem{S}
Barry Simon, 
\emph{Orthogonal polynomials on the unit circle. Part 2. Spectral theory.}
 American Mathematical Society Colloquium Publications, 54, Part 2. 
American Mathematical Society, Providence, RI, 2005.

\bibitem{TT}
Terence Tao and Christoph Thiele,
 \emph{Nonlinear Fourier Analysis.}
IAS/Park City Graduate Summer School, unpublished lecture notes (2003), available at arXiv:1201.5129.

\bibitem{W} Norbert Wiener, 
\emph{The quadratic variation of a function and its Fourier coefficients.} 
MIT Journal of Math. and Physics {\bf 3} (1924), 72--94.

\end{thebibliography}
\end{document}